\DeclareMathOperator{\fdim}{fdim}
\DeclareMathOperator{\At}{At}
\DeclareMathOperator{\diam}{diam}
\newcommand{\sk}[1]{{\color{black}#1}}
\newtheorem{thm}{Theorem}
\newtheorem{cor}{Corollary}
\newtheorem{lem}[thm]{Lemma}
\newtheorem{prop}[thm]{Proposition}
\newtheorem{ex}[thm]{Example}
\newtheorem*{rem}{Remark}
\newcommand{\stkout}[1]{\ifmmode\text{\sout{\ensuremath{#1}}}\else\sout{#1}\fi}
\newtheorem{discu}{Discussion:}
\newtheorem{conje}{Conjecture:}
\begin{document}
	\title{\textbf{Computing fault-tolerant metric dimension of graphs using their primary subgraphs} } 
	\author{\begin{tabular}{rlc}
\textbf{S. Prabhu$^{\text{a,}}$\thanks{Corresponding author: drsavariprabhu@gmail.com}\ , 
Sandi Klav\v{zar}$^{\text{b,c,d}}$,
K. Bharani Dharan$^{\text{e}}$, 
S. Radha$^{\text{e}}$}
\end{tabular}\\ 
\begin{tabular}{c}
\small $^{\text{a}}$ Department of Mathematics, Rajalakshmi Engineering College, Chennai 602105, India\\ 
\small $^{\text{b}}$ Faculty of Mathematics and Physics, University of Ljubljana, Slovenia\\
\small $^{\text{c}}$ Institute of Mathematics, Physics and Mechanics, Ljubljana, Slovenia\\
\small $^{\text{d}}$ Faculty of Natural Sciences and Mathematics, University of Maribor, Slovenia\\
\small $^{\text{e}}$ School of Advanced Sciences, Vellore Institute of Technology, Chennai 600127, India 
\end{tabular}}
\maketitle
\begin{abstract}
The metric dimension of a graph is the cardinality of a minimum resolving set, which is the set of vertices such that the distance representations of every vertex with respect to that set are unique. A fault-tolerant metric basis is a resolving set with a minimum cardinality that continues to resolve the graph even after the removal of any one of its vertices. The fault-tolerant metric dimension is the cardinality of such a fault-tolerant metric basis. In this article, we investigate the fault-tolerant metric dimension of graphs formed through the point-attaching process of primary subgraphs. This process involves connecting smaller subgraphs to specific vertices of a base graph, resulting in a more complex structure. By analyzing the distance properties and connectivity patterns, we establish explicit formulae for the fault-tolerant resolving sets of these composite graphs. Furthermore, we extend our results to specific graph products, such as rooted products. For these products, we determine the fault-tolerant metric dimension in terms of the fault-tolerant metric dimension of the primary subgraphs. Our findings demonstrate how the fault-tolerant dimension is influenced by the structural characteristics of the primary subgraphs and the attaching vertices. These results have potential applications in network design, error correction, and distributed systems, where robustness against vertex failures is crucial.
	\end{abstract}
	
	\section{Introduction}

	The metric dimension of a graph is a fundamental metric in the study of networks, identifying the smallest set of landmark vertices needed to uniquely locate all other vertices using distance measures \cite{Sl75,HaMe76}. This concept underpins numerous applications including network verification, autonomous navigation, and strategic resource placement. However, real-world networks often face disruptions due to system faults, attacks, or environmental changes \cite{YuOtHa16}. To enhance robustness, the notion of the fault-tolerant metric dimension \rm{(\textbf{FTMD})} was developed \cite{HeMoSl08}, ensuring the uniqueness of vertex identification persists even with the failure of any landmark node. This property enhances the resilience \cite{JhPi17}, dependability \cite{AlKyHu09}, and robustness of networks \cite{GhEl09}, which is crucial for domains like defense technology, sensor-based infrastructures, distributed systems, and intelligent transport networks. Calculating  \textbf{FTMD} is instrumental for constructing networks capable of withstanding failures while preserving their operational integrity, facilitating continuous monitoring and swift system adaptation during breakdowns \cite{GhEl09}. The concept of  \textbf{FTMD} was first explored in tree structures where its connection to the traditional metric dimension was examined \cite{HeMoSl08}. Later works delved into fault-tolerant resolving sets \cite{JaSaCh09}. Further investigations on characterization of graphs with respect to maximum  \textbf{FTMD} were erroneously done in ~\cite{RaHaIm19}, later it was corrected by Prabhu et al. in \cite{PrMaAr22}.  This studies extended to convex polytopes \cite{RaHaPa18},  grid graphs \cite{SiBoMa18}, and interconnection models like honeycomb and hexagonal networks were analyzed \cite{RaHaPa19}. Circulant graphs with degrees 4, 6, and 8 were thoroughly studied \cite{SeMa19,BaSaDa20}. The parameter  \textbf{FTMD} has also been investigated for convex polytopes \cite{SiHaKh19}, butterfly, Bene\v{s}, and extended honeycomb-based silicate networks \cite{PrMaAr22}.
	
	Further analysis of well-known networks such as  generalized fat-tree \cite{PrMaDa24}, biswapped network \cite{AsNaAl25}, and fractal cubic network \cite{ArKlPr23}. Algebraic graph models including zero-divisor graphs and their line graphs were also studied \cite{ShBh022}, and annihilator graphs over various ring structures have also been investigated \cite{AkMa25}.
	More recent efforts have broadened the application of  \textbf{FTMD} in specific nanotube structures with constant  \textbf{FTMD} \cite{HuMu23}. Further work includes  \textbf{FTMD} analysis on 
	arithmetic graphs \cite{SaRaCa24}, and barycentric subdivisions \cite{AhAsBa24}. The other fault-tolerant variants reported in \cite{PrJaAr25,PrJaKl25,PrJa24,BhRa25, KhHaAz25, PrArHe24} are also interesting to investigate.
		
	\section{Preliminaries}
	
Throughout this paper we denote the simple, undirected, connected graph as $G$, and the metric \( d_G: V(G) \times V(G) \rightarrow \mathbb{N}_0\) (\( d:V(G) \times V(G) \rightarrow \mathbb{N}_0\) if $G$ is understood) is defined to assign to each pair \( (x,y) \), the minimum number of edges required to connect them. Here, we denote $\{0,1,2,\ldots\}$ as $\mathbb{N}_0$. Define $r(v|X)=(d(v,x_1),d(v,x_2),\dots,d(v,x_l))$ as the representation of a vertex $v\in G$ concerning the ordered subset $X=\{x_1,x_2,\dots, x_l\}$. The subset $X$ is called a \textit{resolving set} if, for any vertices $x,y\in V(G)$, the condition $r(x|X)\ne r(y|X)$ holds true.  Let the set $X$ be classified as a {\em fault-tolerant resolving set} (\textbf{FTRS}) if, for any element $s \in X$, the set $X \setminus \{s\}$ remains a resolving set.  A \textbf{FTRS} $X$ is considered minimal if there is no other fault-tolerant resolving set $X'$ such that $X'\subset X$.  A minimal \textbf{FTRS} that possesses the least number of elements is referred to as a {\em fault-tolerant basis} (\textbf{FTB}).  The cardinality of a fault-tolerant basis is referred to as the \textbf{FTMD} of the graph $G$, coined by $\fdim(G)$.  A minimal  \textbf{FTRS} with maximum cardinality is referred to as an {\em upper fault-tolerant basis}. The cardinality of this upper \textbf{FTB} is termed the upper \textbf{FTMD} of $G$, denoted as $\fdim^{+}(G)$. The upper \textbf{FTMD} of some standard graphs are as follows. 
	\begin{itemize}
		\item for $n\ge 2$, $\fdim(P_n)=2<\fdim^{+}(P_n)=3$; 
		\item for $n\geq 5$, $\fdim(C_n)=\fdim^{+}(C_n)=3$;
		\item for $t\geq 3$, $\fdim(K_{1,t})=\fdim^{+}(K_{1,t})=t$;
		\item for $n\geq 3$, $\fdim(K_n)=\fdim^{+}(K_n)=n$. 
	\end{itemize}
	
	Let \( G \) be a resulting graph formed by merging a collection of pairwise disjoint graphs \( G_1, \dots, G_k \), $k\ge 2$, through the following process.  
	First, select two vertices one from \( G_1 \) and the other from \( G_2 \), and identify them.  Next, continue this process inductively: assume that the graphs \( G_1, \dots, G_i \) have already been included with \( i\in \{ 2, 3, \ldots, k - 1 \}\).  Choose a vertex from the current graph (possibly one of the previously identified vertices) and a vertex from \( G_{i+1} \), then identify them.  The resulting graph \( G \) has a tree-like structure, with the graphs \( G_i \) serving as its building blocks (see Figure 1).  We refer to this construction as \textit{point-attaching} of \( G_1, \dots, G_k \), where the graphs \( G_i \) are called the \textit{primary subgraphs} of \( G \) \cite{DeKl12}. Throughout this paper, the notation $[k]$ denotes the index set $\{1,2,\dots,k\}$. Here we would like to emphasize that if each of the graphs $G_i$, $i\in [k]$, is 2-connected, then the graphs $G_i$ are precisely the blocks of the graph constructed in the described way, see Figure 1 again.

	\begin{figure}[ht!]
		\label{Graph with attaching vertices}
		\centering
		\includegraphics{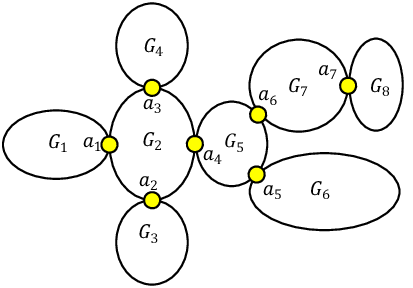}
		\caption{Graph $G$ with attaching vertices}
	\end{figure}
	
A vertex $a\in V(G)$ is said to be an attachment vertex of $G$, if $V(G_i)\cap V(G_j)=\{a\}$, for some $i$ and $j$.
The collection of all such vertices of \( G \) is denoted by \( \At(G) \), \( \At(G_i) = \At(G) \cap V(G_i) \).  Note that, $V(G_i)\cap V(G_j)=\At(G_i)\cap \At(G_j)$ and $E(G_i)\cap E(G_j)=\emptyset$, for any $i\ne j$ and $1\leq i,j\leq k$.
	
The utility of primary subgraphs, which were introduced for the first time in \cite{DeKl12}, has been demonstrated across various graph parameters. Topological indices like the Hosoya polynomial \cite{DeKl12}, atom-bond connectivity \cite{Gh22}, Graovac-Ghorbani index \cite{Gh22}, and elliptic Sombor index \cite{GhAl23}, have been efficiently computed via this approach. Additionally, parameters such as the total domination polynomial \cite{AlJa18}, distinguishing number and distinguishing index \cite{AlSo19}, and strong domination number \cite{AlGhHe23} 
	have all seen computational benefits from subgraph decomposition. Metric-based parameters in particular have shown promising results through this strategy. Certain metric dimension parameters have been successfully determined using primary subgraph frameworks \cite{KuRoYe17, RoGaBa15}, reducing computational demands.
	
	It is interesting to note that the Cactus graphs \cite{CeZe17}, rooted products of graphs \cite{MaWa23}, block graphs \cite{BeJaTa10}, corona products of graphs \cite{McMc11} are some few prominent examples of graphs formed by point-attaching process.
	
We now give a couple of basic properties of graphs formed by point-attaching. For this sake recall that a subgraph $H$ of a graph $G$ is {\em isometric}, if for every two vertices $u,v\in V(H)$ we have $d_H(u,v) = d_G(u,v)$, and that $H$ is {\em convex} if every shortest $u,v$-path from $G$ lies completely in $H$. Clearly, every convex subgraph is isometric (but not necessarily the other way around). 	
	\begin{lem}
		\label{lem:basic-properties}
		If $G$ is a graph formed by point-attaching  $G_1, \dots, G_k$, $k\geq 1$, then	the following properties hold. 
		\begin{enumerate}
			\item[(i)] $G_i$, $i\in [k]$, is a convex subgraph of $G$.
			\item[(ii)] If $u\in V(G_i)$ and $v\in V(G_j)$, where $i\ne j$, then there exist attachment vertices $x_i\in V(G_i)$ and $x_j\in V(G_j)$, such that every shortest $u,v$-path contains $x_i$ and $x_j$.
		\end{enumerate}
	\end{lem}
	
	\begin{proof}
		(i) Let $u$ and $v$ be arbitrary vertices from some primary subgraph $G_i$, where $i\in [k]$. Consider an arbitrary shortest $u,v$-path $P$ in $G$. If $P$ contains no attachment vertex, then $P$ clearly lies completely in $G_i$. Assume next that $P$ contains some attachment vertex $x$ and suppose that $x$ has a neighbor $y\notin V(G_i)$ on $P$, so that $P$ contain vertices $u, \ldots, x, y, \ldots, v$ in that order. Then by the point-attaching procedure, $P$ cannot re-enter $G_i$, that is, $P$ is not a shortest $u.v$-path. This contradiction proves that all the vertices of $P$ belong to $V(G_i)$ and we can conclude that $G_i$ is a convex subgraph of $G$. 
		
		(ii) Let $u\in V(G_i)$ and $v\in V(G_j)$, where $i\ne j$, and let $P$ be an arbitrary shortest $u,v$-path. Then by the (tree-like) point-attaching procedure, there exist a unique sequence of primary subgraphs $G_{k_1}, \ldots, G_{k_r}$, $r\ge 1$, such that $P$ sequentially contains vertices from $G_i, G_{k_1}, \ldots, G_{k_r}, G_j$. Then the claimed attachment vertex $x_i\in V(G_i)$ is the unique common vertex of $G_i$ and $G_{k_1}$, and the claimed attachment vertex $x_j\in V(G_j)$ is the unique common vertex of $G_{k_r}$ and $G_j$. 
	\end{proof}
	
	We add that in Lemma~\ref{lem:basic-properties}(ii), it is possible that $x_i = u$, or that $x_j=v$, or that $x_i=x_j$. 

	A \textit{primary end-subgraph} is a primary subgraph \( G_i \) with exactly one attachment vertex, i.e., \( |\At(G_i)| = 1 \).  It is called a \textit{primary internal subgraph} if it contains two or more attachment vertices, i.e., \( |\At(G_i)| \geq 2 \).  
	
	In this paper, we present exact expressions for the \textbf{FTMD} of graphs formed through point-attaching.  
	We apply the main result to specific graph constructions, including the rooted product and block graphs. An open neighborhood of a vertex \( v \in V(G) \) is denoted by \( N_G(v) \), and its eccentricity by \( \epsilon_G(v) \), \sk{that is, the maximum distance between $v$ and all the other vertices of $G$}. A graph \( G \) is said to be {\em even graph}~\cite{GoVe86} if, for every \( x \in V(G) \), there exist an unique \( y \in V(G) \), such that they are diametrically opposite. The hypercubes and even cycles are few examples of even graphs. Throughout the paper, definitions and concepts are introduced as they become relevant.
\section{Main results}
\label{sec:main}

To establish a foundation for our discussion, we first derive a  bound for the FTMD of $G$ formed by the point-attaching process. In this case, no specific rule governs the construction process through point-attaching, making the analysis more intricate. Since these constructions rely on the attachment process, they present additional challenges in determining their FTMD. To address this complexity, we introduce an additional parameter that directly pertains to the FTMD of graphs derived from primary subgraphs. 

An {\em attaching} \textbf{FTRS} of \( G_i \) is a subset \( \mathscr{F}_i\subset V(G_i)\setminus \At(G_i) \) such that \( \{\mathscr{F}_i\cup \At(G_i)\}\setminus \{x\}\) is a resolving set of \( G_i \) for every $x\in \mathscr{F}_i$. An {\em attaching} \textbf{FTB} of \( G_i \) is an attaching  \textbf{FTRS} of minimum cardinality, and its size is called the {\em attaching} \textbf{FTMD} of \( G_i \), denoted by \( \fdim^*(G_i, \At(G_i))\). For a given primary subgraph $G_i$, we will assume that its set $\At(G_i)$ is fixed, so in the following we may simplify the notation $\fdim^*(G_i, \At(G_i))$ to $\fdim^*(G_i)$. 

For example, if \( |\At(G_i)| = 1 \), then \( \fdim^*(G_i) \in \{\fdim(G_i), \fdim(G_i) - 1 \}\). Consider next the case $G_i = P_n$. Then \(\fdim^*(P_n) = 2\), if \(\At(P_n)\) consists of a single vertex of degree 2, while in all the other cases \(\fdim^*(P_n) = 0\). For the cycle graph \(C_n\), the value of \(\fdim^*(C_n)\) is 2 when \(\At(C_n)\) contains exactly one vertex, or when it consists of exactly two antipodal vertices and the cycle has even length. In all the other situations, \(\fdim^*(C_n) = 0\). Furthermore, $\fdim^*(K_n)=n-|\At(K_n)|$. For a complete graph $K_n$, the attaching fault-tolerant metric dimension is given by $\fdim^*(K_n) = n - |\At(K_n)|$, if $|\At(K_n)| < n-1$, and $\fdim^*(K_n) = 0$ otherwise.

\begin{prop}\label{prop1}
If $G$ is formed by the point-attaching process over $\{G_i: i \in [k]\}$, $k\geq 1$, then	\begin{equation*}
\fdim(G)\geq \sum_{i=1}^k \fdim^*(G_i).
\end{equation*}
\end{prop}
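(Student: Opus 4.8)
The plan is to start from a fault-tolerant basis $W$ of $G$, so that $|W| = \fdim(G)$, and to project it onto each primary subgraph. For each $i$ set $W_i = W \cap (V(G_i)\setminus \At(G_i))$. Since for $i\neq j$ we have $V(G_i)\cap V(G_j) = \At(G_i)\cap\At(G_j)$, the sets $V(G_i)\setminus\At(G_i)$ are pairwise disjoint, hence so are the $W_i$, and therefore $\fdim(G) = |W| \geq \sum_{i=1}^k |W_i|$. The whole proposition then follows once I show that each $W_i$ is an \emph{attaching} fault-tolerant resolving set of $G_i$, because minimality of $\fdim^*(G_i)$ will give $|W_i| \geq \fdim^*(G_i)$, and I can sum over $i$.

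The technical heart is a projection lemma describing how landmarks lying outside $G_i$ act on the vertices of $G_i$. First I would recall that $d_G(x,y) = d_{G_i}(x,y)$ for $x,y\in V(G_i)$ (already noted in the preliminaries). Then, for $w\notin V(G_i)$ and $u\in V(G_i)$, I would argue that any shortest $u$--$w$ path must leave $G_i$ through an attachment vertex: tracing the path from $u$, the first edge exiting $V(G_i)$ belongs to some $G_j$ with $j\neq i$ (since $E(G_i)\cap E(G_j)=\emptyset$), so its endpoint inside $G_i$ lies in $V(G_i)\cap V(G_j)\subseteq \At(G_i)$. This yields
\begin{equation*}
d_G(u,w) = \min_{a\in\At(G_i)}\bigl(d_{G_i}(u,a) + d_G(a,w)\bigr).
\end{equation*}
The consequence I need is: if some $w\notin V(G_i)$ resolves two vertices $u,v\in V(G_i)$, then some attachment vertex $a\in\At(G_i)$ already resolves them in $G_i$; for otherwise $d_{G_i}(u,a)=d_{G_i}(v,a)$ for every $a\in\At(G_i)$, and the displayed formula forces $d_G(u,w)=d_G(v,w)$, a contradiction.

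With this in hand I would verify the attaching fault-tolerant condition for $W_i$. Fix $x \in W_i$; since $W$ is fault-tolerant, $W\setminus\{x\}$ resolves $G$. Take distinct $u,v\in V(G_i)$ and a landmark $w\in W\setminus\{x\}$ with $d_G(u,w)\neq d_G(v,w)$. If $w\in V(G_i)$, then $w$ resolves $u,v$ already in $G_i$ and $w\in (W_i\cup\At(G_i))\setminus\{x\}$; if $w\notin V(G_i)$, the projection lemma supplies an attachment vertex $a\in\At(G_i)\subseteq (W_i\cup\At(G_i))\setminus\{x\}$ resolving them (note $x\notin\At(G_i)$). In either case $(W_i\cup\At(G_i))\setminus\{x\}$ resolves $G_i$, which is exactly the defining property. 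Hence $|W_i|\geq\fdim^*(G_i)$, and summing gives the claim.

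The step I expect to be the main obstacle is the projection lemma, specifically making rigorous that external landmarks can only observe $G_i$ through its attachment vertices, so that their entire resolving power on $V(G_i)$ is already captured by $\At(G_i)$. Everything else (disjointness, the summation, and the two-case argument) is bookkeeping once this structural fact is in place. A minor point to handle carefully is the degenerate case $W_i=\emptyset$: here one still needs $\At(G_i)$ alone to resolve $G_i$, which follows from the same two-case argument applied to the resolving set $W$ itself and matches $\fdim^*(G_i)=0$.
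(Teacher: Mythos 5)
Your proposal is correct and takes essentially the same route as the paper's proof: both project a fault-tolerant basis of $G$ onto $V(G_i)\setminus \At(G_i)$, show the projected set is an attaching fault-tolerant resolving set by arguing that any landmark outside $G_i$ resolves vertices of $G_i$ only through $\At(G_i)$, and then sum over $i$ using disjointness. The only differences are cosmetic: the paper states the structural fact via the unique gate vertex $a\in\At(G_i)$ satisfying $d_G(u,y)=d_G(u,a)+d_G(a,y)$ rather than your $\min$-formula over $\At(G_i)$, and your explicit treatment of the degenerate case $W_i=\emptyset$ (where $\At(G_i)$ itself must resolve $G_i$) is a point the paper leaves implicit.
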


\begin{proof}
For $i\in \mathbb{N}_k$, let $\mathscr{F}_i= \mathscr{F} \cap  (V(G_i)\setminus \At(G_i))$, where $\mathscr{F}$ is a \textbf{FTB} of $G$. We claim that $\mathscr{F}_i$ is an attaching  \textbf{FTRS} of $G_i$. That is, we need to prove $\mathscr{F}_i\cup \At(G_i)\setminus\{x\}$ is a resolving set for each $x\in \mathscr{F}_i$. 

Fix $x\in \mathscr{F}_i$, and $u,v \in V(G_i)$. Since $\mathscr{F}$ is the \textbf{FTB} of $G$, the vertices $u$ and $v$ are resolved in $G$ by some vertex $y$ from $\mathscr{F}\setminus \{x\}$. Assume first that $y\in V(G_i)$. Then having in mind that $G_i$ is an isometric subgraph of $G$, vertices $u$ and $v$ are also resolved in $G_i$. Assume second that $y\in V(G_j)$ for some $j\ne i$. Then there exists a unique vertex $a\in \At(G_i)$  such that $d(y,u)=d(y,a)+d(a,u)$ and $d(y,v)=d_{G}(y,a)+d(a,v)$. Since $d(y,u) \ne d(y,v)$, it follows that $d(a,u) \ne d(a,x)$. This in turn gives $d_{G_i}(u,a) \ne d_{G_i}(x,a)$, hence $u$ and $v$ are also resolved in $G_i$ with a vertex from $\mathscr{F}_i\cup \At(G_i)\setminus\{x\}$. We have thus proved that  $\mathscr{F}_i$ is an attaching  \textbf{FTRS} of $G_i$ and therefore $|\mathscr{F}_i|\geq \fdim^*(G_i)$. This implies $$\fdim(G) = |\mathscr{F}| \ge \sum_{i=1}^k |\mathscr{F}_i|\geq \sum_{i=1}^k \fdim^*(G_i)$$
and we are done. 
\end{proof}
	
By Proposition~\ref{prop1}, for the graph illustrated in Figure~\ref{Contradiction1}, we obtain a lower bound of $4$ for the fault-tolerant metric dimension. However, the exact fault-tolerant metric dimension of this graph is $6$, showing that the bound is not tight in general. To address this gap, we introduce Condition $1$ $(\mathscr{C}_1)$. Similarly, for the graph illustrated in Figure~\ref{Contradiction2}, we obtain a lower bound of $2$ for the fault-tolerant metric dimension, whereas the exact value is $4$. This example highlights the necessity that each primary end subgraph must contain at least two fault-tolerant resolvers; therefore, we introduce Condition $2$  $(\mathscr{C}_2)$.

\begin{figure}[ht!]
	\centering
	\includegraphics{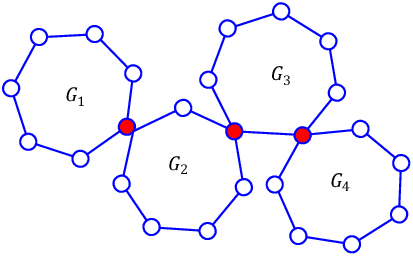}
	\caption{A graph obtained by point attachment of $G_i$, $i\in [4]$, where \sk{$G_i \cong C_7$}, $i\in [4]$.}
	\label{Contradiction1}
\end{figure}

\begin{figure}[ht!]
	\centering
	\includegraphics{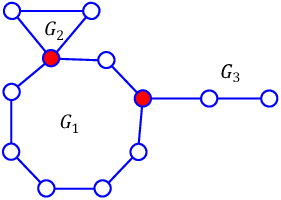}
	\caption{A graph obtained by point attachment of $G_1\cong C_8$, $G_2\cong K_3$ and $G_3\cong P_3$.}
	\label{Contradiction2}
\end{figure}

\noindent\textbf{Condition  1 $(\mathscr{C}1)$:} For each $a_1\in \At(G_i)$ and $v\in V(G_i)\setminus \At(G_i)$, there exists $a_2\in \At(G_i)$ such that $d_{G_i}(a_1,a_2)\geq d_{G_i}(v,a_2)$.
		
		\noindent\textbf{Condition 2 $(\mathscr{C}2)$:} $\At(G_i)=\{a\}$  and either $G_i\cong P_n$ and $a$ is a vertex which is not a leaf, or $G_i$ is not a path.
		
		Note that there are large class of connected graphs satifying condition \( \mathscr{C}1 \). For example, this holds when \( G_i \) meets one of the following.
		
		\begin{enumerate}
			\item $V(G_i)=\At(G_i)$.
			\item Any two vertices in $\At(G_i)$ \sk{are not} adjacent in $G_i$ and $\diam(G_i)=2$.
			\item \sk{$\epsilon_{G_i}(u_1) = \epsilon_{G_i}(u_2) = d_{G_i}(u_1,u_2)$} for any pair of distinct vertices $u_1,u_2\in \At(G_i)$. In particular, this includes all non-trivial complete graphs.
			\item $G_i$ is an even graph and if $u\in \At(G_i)$, then the vertex antipodal to $u$ also belongs to $\At(G_i)$. 
		\end{enumerate}
		 A graph \(G\) has fault-tolerant metric dimension $2$ if and only if it is a path graph~\cite{HeMoSl08}. Moreover, a set \(\{v_1, v_2\}\) forms a fault-tolerant metric basis for a path if and only if both \(v_1\) and \(v_2\) are leaves of the path. Therefore, if a graph \(G_i\) satisfies condition \(\mathscr{C}2\), then its fault-tolerant metric dimension must be at least $2$, that is, \(\fdim^*(G_i) \geq 2\).

To demonstrate that the bound of Proposition~\ref{prop1} is tight, we impose restrictions on primary subgraphs using conditions \(\mathscr{C}1\) and \(\mathscr{C}2\) as follows.  
			
		\begin{thm}\label{theorem2}
			If $G$ is formed by the point-attaching process over $\{G_i: i \in [k]\}$, \( k \geq 3 \),  
			such that each \sk{ primary internal subgraph satisfies $\mathscr{C}1$, each primary end-subgraph satisfies $\mathscr{C}2$,} 
		and no two primary end-subgraphs share a vertex, then
			\begin{equation*}
				\fdim(G)=\sum_{i=1}^k\ \fdim^*(G_i).
			\end{equation*}
		\end{thm}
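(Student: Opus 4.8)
The plan is to prove the matching upper bound, since Proposition~\ref{prop1} already gives $\fdim(G)\ge \sum_{i=1}^k \fdim^*(G_i)$. For each $i$ I fix an attaching fault-tolerant basis $\mathscr{F}_i\subset V(G_i)\setminus \At(G_i)$ with $|\mathscr{F}_i|=\fdim^*(G_i)$ and set $\mathscr{F}=\bigcup_{i=1}^k \mathscr{F}_i$. Since the sets $\mathscr{F}_i$ avoid the (shared) attachment vertices, the union is disjoint and $|\mathscr{F}|=\sum_{i=1}^k\fdim^*(G_i)$, so it suffices to show that $\mathscr{F}$ is fault-tolerant resolving, i.e.\ that $\mathscr{F}\setminus\{x\}$ resolves $G$ for every $x\in\mathscr{F}$. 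I would first record three facts that drive everything. (i) The block--attachment incidence structure of $G$ is a tree whose leaves are exactly the primary end-subgraphs; hence, looking out from any block $G_m$ through any $a\in\At(G_m)$, the branch beyond $a$ contains at least one end-subgraph. (ii) Each end-subgraph satisfies $\mathscr{C}2$, and since fault-tolerant metric dimension equals $2$ only for paths while $\mathscr{C}2$ forbids attaching a path at a leaf, one checks $\fdim^*(G_i)\ge 2$ for every end-subgraph; thus each end-subgraph carries at least two landmarks of $\mathscr{F}$, which is the source of robustness against deleting a single $x$. (iii) If $w$ lies beyond $a\in\At(G_m)$ relative to $G_m$, then $a$ is a cut vertex separating $w$ from $V(G_m)$, so $d_G(t,w)=d_G(t,a)+d_G(a,w)$ for all $t\in V(G_m)$; hence $w$ resolves a pair of $G_m$ if and only if $a$ resolves it inside $G_m$.

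For a pair $u,v$ lying in a common block $G_m$, I would invoke the attaching fault-tolerant property of $\mathscr{F}_m$: the set $(\mathscr{F}_m\cup\At(G_m))\setminus\{x\}$ (the full set when $x\notin V(G_m)$) resolves $G_m$, so some $z$ in it separates $u$ and $v$ inside $G_m$. If $z\in\mathscr{F}_m$ then $z\in\mathscr{F}\setminus\{x\}$ separates $u,v$ in $G$ by isometry. If $z\in\At(G_m)$, then by fact~(iii) any landmark beyond $z$ separates the pair in $G$; by fact~(i) there is an end-subgraph beyond $z$, and by fact~(ii) it retains a landmark after deleting $x$, so such a resolver survives in $\mathscr{F}\setminus\{x\}$.

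The core case is $u\in V(G_m)$, $v\in V(G_n)$ with $u,v$ separated by an attachment vertex; let $a\in\At(G_m)$ and $b\in\At(G_n)$ be the gateways on the geodesic between them. If $v$ is interior to an internal block $G_n$, I apply $\mathscr{C}1$ with $a_1=b$ to obtain $a_2\in\At(G_n)$ with $d_{G_n}(b,a_2)\ge d_{G_n}(v,a_2)$ and $a_2\ne b$; for any landmark $w$ beyond $a_2$ one has $d_G(u,w)-d_G(v,w)=d_G(u,b)+d_{G_n}(b,a_2)-d_{G_n}(v,a_2)>0$, and by facts (i)--(ii) such a $w$ exists in $\mathscr{F}\setminus\{x\}$. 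The symmetric choice covers $u$ interior to an internal block. If an endpoint, say $v$, is itself an attachment vertex, I take $w$ beyond $v$ in a branch not containing $u$, so that $d_G(u,w)=d_G(u,v)+d_G(v,w)>d_G(v,w)$. Finally, if $u,v$ are interior to two different end-subgraphs, disjointness forces $a\ne b$, hence $M=d_G(a,b)\ge 1$; since $d_{G_m}(u,a)+M\le d_{G_n}(v,b)$ and $d_{G_n}(v,b)+M\le d_{G_m}(u,a)$ would give $2M\le 0$, at least one of these inequalities is strict, and then every landmark of the corresponding end-subgraph resolves $u,v$, at least one of which lies in $\mathscr{F}\setminus\{x\}$.

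The main obstacle is exactly this different-block analysis: one must exhibit a separating landmark that is guaranteed to remain after deleting the faulty vertex $x$, even when $u$ or $v$ sits deep inside a block or is an attachment vertex. This is where all three hypotheses are used together --- $\mathscr{C}1$ supplies an attachment vertex of an internal block that is ``at least as far'' as an interior vertex, turning a within-block distance comparison into a strict one after routing through a cut vertex; the disjointness of the attachment vertices of the end-subgraphs guarantees a strictly positive separation $M\ge 1$; and the bound $\fdim^*\ge 2$ for end-subgraphs guarantees that every branch used retains a landmark after the single deletion. Verifying that these ingredients cover every positional sub-case (interior versus attachment vertex, end versus internal block, and whether $x$ lies in the branch being exploited) is the routine but lengthy bookkeeping that the formal proof must carry out.
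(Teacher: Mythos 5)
Your proposal is correct and follows essentially the same route as the paper's proof: the lower bound from Proposition~\ref{prop1}, then showing $\mathscr{F}=\bigcup_i\mathscr{F}_i$ is fault-tolerant resolving via the same case split (same block vs.\ different blocks), with $\mathscr{C}1$ producing the strict inequality through a second attachment vertex, $\mathscr{C}2$ guaranteeing at least two landmarks in every end-subgraph, and the disjointness hypothesis yielding the $2\,d_G(a,b)\le 0$ contradiction in the two-end-subgraph case. If anything, your explicit treatment of endpoints that are attachment vertices and your sharper observation that $\fdim^*(G_i)\ge 2$ under $\mathscr{C}2$ (the paper states only $\ge 1$ in the prose but uses $\ge 2$ in the proof) tighten spots where the paper's bookkeeping is loose.
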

		
		\begin{proof}
			By Proposition \ref{prop1}, we have \( \fdim(G) \geq \sum_{i=1}^k \fdim^*(G_i) \), hence \sk{to prove the assertion it suffices to demonstrate} that \( \fdim(G) \leq \sum_{i=1}^k \fdim^*(G_i) \).  Let \( \mathscr{F}_i \), $i\in [k]$, be an attaching \textbf{FTB} of \( G_i \). \sk{Setting
$$\mathscr{F} = \bigcup_{i=1}^k \mathscr{F}_i\,,$$
we thus need to prove that $\mathscr{F}$} forms a \textbf{FTRS} for \( G \). \sk{For this sake we must verify that for every two different vertices $x_1$ and $x_2$ of $G$, and for every vertex $y\in \mathscr{F}$, some vertex from $\mathscr{F} \setminus \{y\}$ resolves $x_1$ and $x_2$. We distinguish the following cases depending on the position of $x_1$ and $x_2$ in primary subgraphs.} 
			
\medskip\noindent
\textbf{Case 1}: $x_1,x_2\in V(G_i)$ \sk{for some $i\in [k]$.}

\medskip\noindent
\textbf{Subcase 1.1}: $\mathscr{F}_i\ne\emptyset$.\\
Since  $\mathscr{F}_i$ is non-empty and \sk{forms} an attaching fault-tolerant basis for $G_i$, there \sk{exist two vertices $u_1,u_2\in \mathscr{F}_i\cup \At(G_i)$,} such that $d(u_1,x_1)\ne d(u_1,x_2)$ and $d(u_2,x_1)\ne d(u_2,x_2)$. If $u_1,u_2\in \mathscr{F}_i$, then \sk{each of $u_1\in \mathscr{F}_i$ and $u_2\in \mathscr{F}_i$ resolves $x_1$ and $x_2$. Hence for every vertex $y\in \mathscr{F}$, the set of vertices  $\mathscr{F} \setminus \{y\}$ resolves $x_1$ and $x_2$ as required. Assume next that at least one of $u_1$ and $u_2$ belongs to $\At(G_i)$, say $u_1\in \At(G_i)$}. \sk{Consider a primary end-subgraph $G_j$, where $j\ne i$, such that among all the vertices of $G_i$, the attaching vertex $u_1$ is the vertex closest to $G_j$. Since $G_j$ obeys $\mathscr{C}2$ by a theorem's assumption, we have $|\mathscr{F}_j|\geq 2$ as argued after condition $\mathscr{C}2$ was introduced. And because $d(w,u_1)=\min\limits_{v\in V(G_i)}\{d(w,v)\}$ holds for each $w\in \mathscr{F}_j$}, \sk{every vertex $w$ from $\mathscr{F}_j$ satisfies} 
\begin{equation*}
d(w,x_1)=d(w,u_1)+d(u_1,x_1)\ne d(x_2,u_1)+d(u_1,w)=d(x_2,w).
\end{equation*}
\sk{We can conclude, having $|\mathscr{F}_j|\geq 2$ in mind, that also if at least one of $u_1$ and $u_2$ belongs to $\At(G_i)$, the set of vertices  $\mathscr{F} \setminus \{y\}$ resolves $x_1$ and $x_2$ for every vertex $y\in \mathscr{F}$.}

\medskip\noindent
\textbf{Subcase: 1.2}: $\mathscr{F}_i = \emptyset.$\\
\sk{Since $\mathscr{F}_i$ is an attaching {\bf FTB} and we have by the subcase assumption $\mathscr{F}_i = \emptyset$, the set} $\At(G_i)$ is a resolving set for $G_i$ \sk{by definition. Hence there exists a vertex $a\in \At(G_i)$ such that $d(x_1,a)\ne d(x_2,a)$. Then there exists a primary end-subgraph $G_j$, where $j\ne i$, such that} for each $w\in \mathscr{F}_{j}$ we have $d(w,a)=\min\limits_{v\in V(G_i)}\{d(w,v)\}$. Hence, for any $w\in \mathscr{F}_j$,
			\begin{equation*}
					d(x_1,w)=d(x_1,a)+d(a,w_1)\ne d(x_2,a)+d(a,w)=d(x_2,w).
			\end{equation*}            
\sk{Because we have assumed that $G_{j}$ satisfies condition $\mathscr{C}2$, we have $|\mathscr{F}_{j}|\geq 2$ as argued earlier. We can conclude that for every vertex $y\in \mathscr{F}$, the set of vertices  $\mathscr{F} \setminus \{y\}$ resolves $x_1$ and $x_2$ as required.}

\medskip\noindent
\textbf{Case 2}: $x_1\in V(G_i)$,  $x_2\in V(G_j)$, where $G_i\ne G_j$. \\ 
\sk{In this case, by the point-attaching construction of $G$, there exist vertices} $a_1\in \At(G_i)$ and $a_2\in \At(G_j)$, such that $d(x_1,x_2)=d(x_1,a_1)+d(a_1,a_2)+d(a_2,x_2).$ Note that $a_1=a_2$, whenever $\At(G_i)\cap \At(G_j)\neq \emptyset$. If $x_2=a_2=a_1$, \sk{then we actually have $x_1, x_2\in V(G_i)$ and we can apply Case 1. Similarly, if $x_1=a_1=a_2$, then $x_1, x_2\in V(G_j)$ and we can again apply Case 1.} Hence, in the rest we may assume that $x_2\ne a_1$ and $x_1\ne a_2$. \sk{We now distinguish two cases depending on whether both primary subgraphs $G_i$ and $G_j$ are internal, or at least one of them is an end-subgraph.}

\medskip\noindent
\textbf{Subcase 2.1}: $|\At(G_i)| \geq 2$ or $|\At(G_j)|\geq 2$. \\
\sk{Assume without loss of generality that $|\At(G_i)|> 1$. Then $G_i$ is a primary internal subgraph, hence by a theorem's assumption} $G_i$ obeys $\mathscr{C}1$,  \sk{that is, there exists a vertex} $c\in \At(G_i)\setminus \{a_1\}$, such that $d_{G}(a_1,c)\geq d_{G}(x_1,c)$. \sk{Consider now a primary end-subgraph $G_{\ell}$ (clearly, $\ell\ne i$), such that $d(t,c) = \min_{v\in V(G_i)}\{d(t,v)\}$ holds} for every $t\in \mathscr{F}_\ell$. As $G_\ell$ obeys $\mathscr{C}2$, \sk{we can again employ the fact that $|\mathscr{F}_\ell| \geq 2$}.  Then for every two vertices $t_1,t_2\in \mathscr{F}_\ell$, 
\begin{align*}
d(x_1,t_1) & = d(c,x_1) + d(c,t_1) \leq d(c,a_1) + d(c,t_1) \\ 
& < d(x_2,a_1) + d(c,a_1) + d(c,t_1) = d(x_2,t_1),\quad {\rm and}\\
d(x_1,t_2) & = d(c,x_1) + d(c,t_2) \leq d(c,a_1) + d(c,t_2) \\
& < d(x_2,a_1) + d(c,a_1) + d(c,t_2) = d(x_2,t_2).
\end{align*}
\sk{From the above two inequalities we can derive that for every vertex $y\in \mathscr{F}$, the set of vertices  $\mathscr{F} \setminus \{y\}$ resolves $x_1$ and $x_2$ as required.}			
			
\medskip\noindent
\textbf{Subcase 2.2}: $|\At(G_i)| =1= |\At(G_j)|$.\\ 
\sk{In this subcase both $G_i$ and $G_j$ are primary end-subgraphs, hence by a theorem's assumption they both obey condition $\mathscr{C}2$. Consequently,} $|\mathscr{F}_i| \geq 2$ and $|\mathscr{F}_j| \geq 2$. \sk{Consider arbitrary two vertices $p_1$ and $p_2$ of $\mathscr{F}_i$, and arbitrary two vertices $q_1$ and $q_2$ of $\mathscr{F}_j$.} If there exist two vertices in $\{p_1,q_1,p_2,q_2\}$ which resolve $x_1$ and $x_2$, then we are done. \sk{Using the method of contradiction,} suppose that there do not exist \sk{two vertices from $\{p_1,q_1,p_2,q_2\}$, such that they distinguish $x_1$ and $x_2$. We may without loss of generality assume} that in this case we have: 
			\begin{equation}\label{equ1}
				d(x_1,p_1)=d(x_2,a_2)+d(a_2,a_1)+d(a_1,p_1)=d(x_2,p_1),
			\end{equation}
			\begin{equation}\label{equ2}
				d(x_1,p_2)=d(x_2,a_2)+d(a_2,a_1)+d(a_1,p_2)=d(x_2,p_2),
			\end{equation}
			\begin{equation}\label{equ3}
				d(x_2,q_1)=d(x_1,a_1)+d(a_1,a_2)+d(a_2,q_1)=d(x_1,q_1).
			\end{equation}
Observe that since $\At(G_i)\cap \At(G_j) = \emptyset$, we have $a_1\ne a_2$. Moreover,
			\begin{equation}\label{equ5}
				d(x_1,p_1)\leq d(p_1,a_1)+d(a_1,x_1),
			\end{equation}
			\begin{equation}\label{equ6}
				d(x_1,p_2)\leq d(p_2,a_1)+d(a_1,x_1),
			\end{equation}
			\begin{equation}\label{equ7}
				d(x_2,q_1)\leq d(q_1,a_2)+d(a_2,x_2).
			\end{equation}
				From \eqref{equ1}, \eqref{equ2}, \eqref{equ5} and \eqref{equ6} we obtain
			\begin{equation}\label{equ9}
			d(a_1,a_2)+	d(a_2,x_2)\leq d(x_1,a_1).
			\end{equation}
			From \eqref{equ3} $\&$  \eqref{equ7}
			\begin{equation}\label{equ11}
				d(x_1,a_1)+d(a_2,a_1)\leq d(x_2,a_2).
			\end{equation}
			Adding \eqref{equ9} $\&$ \eqref{equ11}, we get
			\begin{equation*}
				2\cdot d(a_1,a_2)\leq 0,
			\end{equation*} 
			which is a contradiction. 
\sk{It implies that there exist two vertices from $\{p_1,q_1,p_2,q_2\}$ which resolve $x_1$ and $x_2$. This settles  Subcase 2.2}. 			
			
\medskip			
\sk{We have thus proved that $\mathscr{F} = \cup_{i=1}^k \mathscr{F}_i$ forms a \textbf{FTRS} for \( G \). This in turn implies that  \( \fdim(G) \leq \sum_{i=1}^k \fdim^*(G_i) \) and we are done.}
\end{proof}

Let us now illustrate Theorem~\ref{theorem2} with two examples, the first of which is a block graph. 

            \begin{ex}
				The graph $G$ illustrated in Figure~\ref{example2} satisfies conditions $(\mathscr{C}_1)$ and $(\mathscr{C}_2)$. Hence, Theorem~\ref{theorem2} can be applied to compute the fault-tolerant metric dimension of $G$ using the attaching metric dimensions of its primary subgraphs as follows:
					\begin{equation*}
						\begin{aligned}
							\fdim(G)
							&=\fdim^*(G_1)+\fdim^*(G_2)+\fdim^*(G_3)+\fdim^*(G_4)+\fdim^*(G_5)\\
							&=2+4+3+2+4\\
							&=15.
						\end{aligned}
					\end{equation*}
				\begin{figure}[H]
					\centering
					\includegraphics[width=0.5\linewidth]{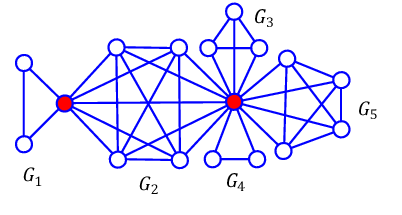}
					\caption{A block graph $G$ constructed by point attaching $G_1\cong K_3$, $G_2\cong K_6$, $G_3\cong K_4$, $G_4\cong K_3$, and $G_5\cong K_5$.}
					\label{example2}
				\end{figure}
				
			\end{ex}
            
			\begin{ex}
			The graph $G$ illustrated in Figure~\ref{Example1} satisfies conditions $(\mathscr{C}_1)$ and $(\mathscr{C}_2)$. Hence, Theorem~\ref{theorem2} can be applied to compute the fault-tolerant metric dimension of $G$ using the attaching metric dimensions of its primary subgraphs as follows:
			\begin{equation*}
				\begin{aligned}
					\fdim(G)
					&=\fdim^*(G_1)+\fdim^*(G_2)+\fdim^*(G_3)+\fdim^*(G_4)+\fdim^*(G_5)\\
					&=2+4+0+2+2\\
					&=10.
				\end{aligned}
			\end{equation*}
			\begin{figure}[H]
				\centering
				\includegraphics{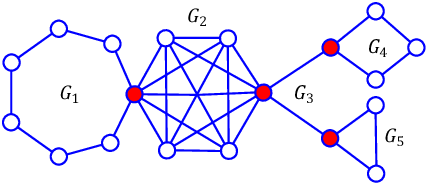}
				\caption{A graph $G$ constructed by point attaching  $G_1\cong C_7$, $G_2\cong K_6$, $G_3\cong P_3$, $G_4\cong C_4$, and $G_5\cong K_3$.}
				\label{Example1}
			\end{figure}
		
			\end{ex}
			
			The following sections focus on deriving several consequences of Theorem \ref{theorem2}.  
			Specifically, we present closed-form expressions for the \textbf{FTMD} of certain collection of graphs which are derived by point-attaching process. 
			
			\section{An extremal case}
			The investigation of the case where each minimal fault-tolerant basis of a primary subgraph is also of minimum cardinality,  
			that is, when \( \fdim(G_i) = \fdim^{+}(G_i) \) for each \( G_i \) were done in this section.  
			Let \( \mathscr{F}(G_i) \) denote the collection of all \textbf{FTB} of \( G_i \) and let
		\begin{equation*}
\theta_i=
\begin{cases}
\max\limits_{F\in\mathscr{F}(G_i)} \{|F\cap \At(G_i)|\}; & \At(G_i) \text{ is not a resolving set of } G_i,\\
\fdim(G_i); & \text{otherwise.}
\end{cases}
\end{equation*}

\begin{cor}\label{corollary3}
Let $G$ is formed by the point-attaching process over $\{G_i: i \in [k]\}$, \( k \geq 3 \), which satisfy the conditions of Theorem~\ref{theorem2}. If also \( \At(G_i) \neq V(G_i) \) and \( \fdim(G_i) = \fdim^{+}(G_i) \), $i\in [k]$, then
				\begin{equation*}
					\fdim(G)=\sum_{i=1}^k (\fdim(G_i)-\theta_i).
				\end{equation*} 
			\end{cor}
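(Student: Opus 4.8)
The plan is to combine Theorem~\ref{theorem2} with a termwise identity for the primary subgraphs. By Theorem~\ref{theorem2}, under the stated hypotheses $\fdim(G)=\sum_{i=1}^{k}\fdim^*(G_i)$, so it suffices to prove that for each $i$,
\[
\fdim^*(G_i)=\fdim(G_i)-\theta_i .
\]
Throughout I would use the standard reformulation of fault tolerance: a set $W\subseteq V(G_i)$ is a fault-tolerant resolving set if and only if every pair of distinct vertices is resolved by at least two vertices of $W$. For the attaching parameter this specializes cleanly. Writing $U$ for the set of pairs of $G_i$ that are \emph{not} resolved by any vertex of $\At(G_i)$, a set $\mathscr{F}_i\subseteq V(G_i)\setminus\At(G_i)$ is an attaching fault-tolerant resolving set precisely when every pair in $U$ is resolved by at least two vertices of $\mathscr{F}_i$; pairs outside $U$ are already handled by $\At(G_i)$, which is never deleted. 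The hypothesis $\fdim(G_i)=\fdim^{+}(G_i)$ is recorded for later use, since it forces every \emph{minimal} fault-tolerant resolving set of $G_i$ to have cardinality exactly $\fdim(G_i)$, and $\At(G_i)\neq V(G_i)$ guarantees $\mathscr{F}_i$ can be sought strictly inside $V(G_i)\setminus\At(G_i)$.

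For the inequality $\fdim^*(G_i)\le \fdim(G_i)-\theta_i$ I would argue constructively. Choose a fault-tolerant basis $F$ of $G_i$ with $|F\cap\At(G_i)|=\theta_i$, which exists by the definition of $\theta_i$, and put $\mathscr{F}_i=F\setminus\At(G_i)$, a subset of $V(G_i)\setminus\At(G_i)$ of size $\fdim(G_i)-\theta_i$. Every pair in $U$ is doubly resolved by $F$; since no vertex of $\At(G_i)$ resolves a pair in $U$, such a pair is in fact doubly resolved by $F\setminus\At(G_i)=\mathscr{F}_i$. By the reformulation above $\mathscr{F}_i$ is an attaching fault-tolerant resolving set, whence $\fdim^*(G_i)\le |\mathscr{F}_i|=\fdim(G_i)-\theta_i$.

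The reverse inequality $\fdim^*(G_i)\ge \fdim(G_i)-\theta_i$ is the step I expect to be the main obstacle. The natural strategy is to start from an attaching fault-tolerant basis $\mathscr{F}_i$ and manufacture a genuine fault-tolerant resolving set of $G_i$ using $\mathscr{F}_i$ together with at most $\theta_i$ attachment vertices; passing to a minimal such set and invoking $\fdim(G_i)=\fdim^{+}(G_i)$ would then produce a fault-tolerant basis of size $\fdim(G_i)$ with at most $\theta_i$ attachment vertices, giving $\fdim(G_i)\le |\mathscr{F}_i|+\theta_i$. The delicate point is that $\mathscr{F}_i\cup\At(G_i)$ need not itself be fault-tolerant: a pair outside $U$ is guaranteed only a single resolver in $\At(G_i)$, so if such a pair is resolved by a unique attachment vertex and by no vertex of $\mathscr{F}_i$, deleting that one attachment vertex destroys resolvability. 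Controlling exactly these pairs that are resolved by a single attachment vertex is the crux of the argument, and it is here that I would try to exploit the extremal hypothesis $\fdim(G_i)=\fdim^{+}(G_i)$ (so that any minimal fault-tolerant resolving subset obtained from a completion of $\mathscr{F}_i$ has the controlled size $\fdim(G_i)$) to show that such a completion can always be chosen without spending more than $\theta_i$ attachment vertices.

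Finally, summing the termwise identity over $i$ and comparing with the conclusion of Theorem~\ref{theorem2} yields $\fdim(G)=\sum_{i=1}^{k}\bigl(\fdim(G_i)-\theta_i\bigr)$, which is the assertion of the corollary.
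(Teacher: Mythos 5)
Your overall route coincides with the paper's: the paper's entire proof consists of asserting, without argument, exactly the termwise identity $\fdim^*(G_i)=\fdim(G_i)-\theta_i$ and then citing Theorem~\ref{theorem2}. Your proof of the direction $\fdim^*(G_i)\le\fdim(G_i)-\theta_i$ is correct, and is already more than the paper supplies, up to one small repair: when $\At(G_i)$ is a resolving set of $G_i$, we have $\theta_i=\fdim(G_i)$ by definition, and a fault-tolerant basis $F$ with $|F\cap\At(G_i)|=\theta_i$ need not exist; in that case, however, your set $U$ is empty, so $\fdim^*(G_i)=0$ and the inequality holds trivially, a two-line case split.

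The reverse inequality, which you honestly left open, is a genuine gap, and it cannot be filled: $\fdim^*(G_i)\ge\fdim(G_i)-\theta_i$ is false under the corollary's hypotheses. Take $G_i=K_{2,3}$ with parts $\{a,b\}$ and $\{x,y,z\}$, and $\At(G_i)=\{a\}$. Each twin pair of $K_{2,3}$ (such as $\{x,y\}$ or $\{a,b\}$) is resolved by no vertex other than its own two members, so every fault-tolerant resolving set must contain all five vertices; hence $V(G_i)$ is the unique fault-tolerant basis, $\fdim(G_i)=\fdim^{+}(G_i)=5$, $\At(G_i)\neq V(G_i)$, and $K_{2,3}$ satisfies $\mathscr{C}2$ (it is not a path), so it is admissible as a primary end-subgraph in Theorem~\ref{theorem2}. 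Here $\theta_i=|V(G_i)\cap\{a\}|=1$, so $\fdim(G_i)-\theta_i=4$. But the only pairs unresolved by $a$ are those inside $\{x,y,z\}$, and each of $\{x,y,a\}$, $\{x,z,a\}$, $\{y,z,a\}$ resolves $K_{2,3}$; hence $\{x,y,z\}$ is an attaching fault-tolerant resolving set and $\fdim^*(G_i)\le 3<4$. The mechanism is exactly the asymmetry your reformulation exposes: the pair $\{a,b\}$ needs two resolvers ($a$ and $b$) in any fault-tolerant basis, but only the never-deleted attachment vertex $a$ in the attaching setting, so $b$ is forced into every basis yet is useless for $\fdim^*$, and $\theta_i$ cannot register this saving because it only counts attachment vertices inside bases. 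The same example shows your sketched completion strategy must fail: completing $\{x,y,z\}\cup\{a\}$ to a genuine fault-tolerant resolving set requires adding $b$, a non-attachment vertex, so the completion costs more than $\theta_i$. In short, you proved everything that is actually true here; the missing half is not a missing idea on your part but a false claim that the paper's one-line proof glosses over (attaching two disjoint copies of $K_{2,3}$ at antipodal vertices of $C_6$ even makes the formula of Corollary~\ref{corollary3} contradict that of Theorem~\ref{theorem2}: the latter gives $3+2+3=8$, the former $4+2+4=10$).
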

            
\begin{proof}
By the assumption, for any \sk{primary subgraph \( G_i, i\in [k] \)}, we have $\fdim(G_i)=\fdim^{+}(G_i)$. Note now that the identity
$\fdim^*(G_i)=\fdim(G_i)-\theta_i$ follows by the definition $\theta_i=\max_{F\in\mathscr{F}(G_i)} |F\cap \At(G_i)|$, because for any attaching fault-tolerant resolving set of $G_i$, the remaining vertices must necessarily be chosen from $V(G_i)\setminus \At(G_i)$, and their number is therefore $\fdim(G_i)-\theta_i$.  Hence, the result follows from Theorem~\ref{theorem2}.
\end{proof}
			
Consider the graph with five primary subgraphs from Figure~\ref{examplecor1}, where $\At(G_1)=\{a_1\}$, $\At(G_2)=\{a_1,a_2,a_3\}$, $\At(G_3)=\{a_2\}$, $\At(G_4)=\{a_3,a_4\}$, and $\At(G_5)=\{a_4\}$. Using Corollary~\ref{corollary3} we get 
\sk{
\begin{align*}
\fdim(G) & = (\fdim(G_1)-1)+(\fdim(G_2)-4)+(\fdim(G_3)-1)+ \\
& \quad\ (\fdim(G_4)-1)+(\fdim(G_5)-1)\\ 
& =3+0+2+2+4=11.
\end{align*}
}

\begin{figure}[ht!]
\centering
\includegraphics{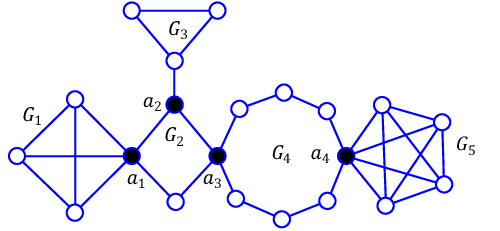}
\caption{A graph $G$ and its primary subgraphs $G_1\cong K_4$, $G_2\cong K_3$, $G_3$ being the paw graph, $G_4\cong C_8$, and $G_5\cong K_5$.}
\label{examplecor1}
\end{figure}

It is interesting to see that block graph can be derived by point-attaching process on a collection of complete graphs. Recalling that \( \fdim(K_n) = n = \fdim^+(K_n) \), we get the remark as a special case of Corollary~\ref{corollary3},  which is as follows.
	
\begin{rem}
If $G$ is a block graph formed by the point-attaching process over $\{K_{r_i}: i \in [k]\}$, where \( k \geq 3 \) and $r_i\ge 3$. If $\At(K_{r_i})\cap \At(K_{r_j})=\emptyset$ for any primary end-subgraphs $K_{r_i}$ and $K_{r_j}$, then,
\begin{equation*}
\fdim(G)=\sum_{i\in [k]\atop |\At(G_i)| < r_i-1}(r_i-|\At(K_{r_i})|).
\end{equation*}
\end{rem}

\section{Rooted products}
			
In this section, we explore a rooted product of graphs constructed through point-attaching.   
			
A rooted graph is the one with a designated vertex that is uniquely vertex labeled to distinguish it from the others.  
This distinguished vertex is referred to as the {\em root} of the graph. Let \( G \) be a vertex labeled graph of order \( n \), and let \( \mathscr{H} = \{H_i: i\in [n]\} \) be a collection of rooted graphs.  The rooted product graph denoted by \( G[\mathscr{H}] \) is formed by identifying the root of each \( H_i \) with the \( i^{\text{th}} \) vertex of \( G \) as defined in \cite{GoMc78}.  It is evident that any rooted product graph \( G[\mathscr{H}] \) can be viewed as a graph derived by point-attaching process. Using Theorem~\ref{theorem2}, we derive the following result.

\begin{cor}\label{cor5}
Let \( G \) of order \( n \geq 2 \),  
and let \( \mathscr{H} = \{H_i: i\in [n]\} \) be a collection of rooted graphs,  
each satisfying $\mathscr{C}2$, with roots \( v_1, \dots, v_n \), respectively. Then 
\begin{equation*}
\fdim(G[\mathscr{H}])=\sum_{H_i\in \mathscr{H}_1} \fdim(H_i)+\sum_{H_j\in \mathscr{H}_2}(\fdim(H_j)-1),
\end{equation*}
where \( H_i \in \mathscr{H}_1 \) if \( v_i \) is not part of any \textbf{FTB} of \( H_i \),  
and \( H_j \in \mathscr{H}_2 \) otherwise. 
\end{cor}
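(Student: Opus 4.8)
The plan is to recognize $G[\mathscr{H}]$ as a point-attaching graph and apply Theorem~\ref{theorem2}. Under this view, $G$ is the unique primary internal subgraph with $\At(G)=V(G)$, while $H_1,\dots,H_n$ are the primary end-subgraphs with $\At(H_i)=\{v_i\}$. First I would check the three hypotheses of Theorem~\ref{theorem2}. The number of primary subgraphs is $k=n+1\geq 3$, since $n\geq 2$. The internal subgraph $G$ satisfies $\mathscr{C}1$ because $\At(G)=V(G)$, which is case (1) of the list following Theorem~\ref{theorem2}. Each end-subgraph $H_i$ satisfies $\mathscr{C}2$ by hypothesis. Finally, for any two end-subgraphs $H_i$ and $H_j$ we have $\At(H_i)\cap\At(H_j)=\{v_i\}\cap\{v_j\}=\emptyset$, because distinct roots are identified with distinct vertices of $G$. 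Hence Theorem~\ref{theorem2} applies and gives
\begin{equation*}
\fdim(G[\mathscr{H}])=\fdim^*(G)+\sum_{i=1}^n \fdim^*(H_i).
\end{equation*}

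Next I would evaluate the two kinds of terms. For the internal factor, $V(G)\setminus\At(G)=\emptyset$, so the empty set is the only candidate attaching fault-tolerant resolving set; since $\At(G)=V(G)$ trivially resolves $G$, it qualifies and $\fdim^*(G)=0$. For each end-subgraph $H_i$ we have $|\At(H_i)|=1$, so by the dichotomy recorded earlier, $\fdim^*(H_i)\in\{\fdim(H_i),\fdim(H_i)-1\}$. It then remains to decide which value occurs, the claim being that $\fdim^*(H_i)=\fdim(H_i)-1$ exactly when $v_i$ lies in some fault-tolerant basis of $H_i$, i.e.\ when $H_i\in\mathscr{H}_2$. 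Substituting term by term, the $\mathscr{H}_1$ summands contribute $\fdim(H_i)$ and the $\mathscr{H}_2$ summands contribute $\fdim(H_i)-1$, which is precisely the stated formula.

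The implication $H_i\in\mathscr{H}_2\Rightarrow\fdim^*(H_i)=\fdim(H_i)-1$ is the easy direction. If $B$ is a fault-tolerant basis of $H_i$ with $v_i\in B$, set $\mathscr{F}=B\setminus\{v_i\}\subset V(H_i)\setminus\At(H_i)$. Then $\mathscr{F}\cup\At(H_i)=B$, and for every $x\in\mathscr{F}$ the set $B\setminus\{x\}$ resolves $H_i$ because $B$ is fault-tolerant; thus $\mathscr{F}$ is an attaching fault-tolerant resolving set of size $\fdim(H_i)-1$, giving $\fdim^*(H_i)\leq\fdim(H_i)-1$, and the dichotomy forces equality.

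The hard part is the converse, that $H_i\in\mathscr{H}_1$ forces $\fdim^*(H_i)=\fdim(H_i)$. Equivalently, I must prove unconditionally that the existence of an attaching fault-tolerant resolving set $\mathscr{F}$ of size $\fdim(H_i)-1$ already implies $v_i$ lies in a fault-tolerant basis, and then contrapose against $H_i\in\mathscr{H}_1$. Writing $S=\mathscr{F}\cup\{v_i\}$, a resolving set of size $\fdim(H_i)$, I would use the characterization that a set is fault-tolerant resolving iff every pair of distinct vertices is resolved by at least two of its members. The attaching condition on $\mathscr{F}$ translates into: every pair is resolved by at least two vertices of $S$, the only possible exceptions being pairs whose unique resolver in $S$ is $v_i$. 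If there are no such exceptional pairs, then $S$ is itself a fault-tolerant basis containing $v_i$, and we are done; note this is automatic when $\mathscr{F}$ already resolves $H_i$. The main obstacle is the remaining case, where $\mathscr{F}$ fails to resolve $H_i$ and some pairs are resolved in $S$ only by $v_i$. Here I would exploit that each landmark of $\mathscr{F}$ is redundant for resolving $S$ (again by the attaching condition) to exchange a suitable landmark for a vertex supplying a second resolver to these exceptional pairs, thereby producing a fault-tolerant basis of size $\fdim(H_i)$ containing $v_i$. Handling several exceptional pairs simultaneously, and verifying that the exchange preserves double-resolution of all remaining pairs, is the delicate step, and is where the structural properties of $H_i$ guaranteed by $\mathscr{C}2$ are expected to be decisive.
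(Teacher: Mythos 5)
Your route is the same as the paper's: the paper's entire proof of Corollary~\ref{cor5} is the single sentence that the result follows from Theorem~\ref{theorem2}, after viewing $G[\mathscr{H}]$ as a point-attaching graph with $G$ the internal subgraph ($\At(G)=V(G)$) and the $H_i$ the end-subgraphs rooted at $v_i$. Your verification of the hypotheses ($k=n+1\geq 3$, condition $\mathscr{C}1$ for $G$ via $\At(G)=V(G)$, disjointness of distinct roots), the evaluation $\fdim^*(G)=0$, and the implication ``$v_i$ belongs to some fault-tolerant basis of $H_i$ $\Rightarrow$ $\fdim^*(H_i)\leq \fdim(H_i)-1$'' are all correct, and are in fact spelled out more carefully than in the paper.

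However, the proposal has a genuine gap, which you flag yourself. To obtain the $\mathscr{H}_1$ terms you must prove that if $v_i$ lies in no fault-tolerant basis of $H_i$, then $\fdim^*(H_i)=\fdim(H_i)$; equivalently, that an attaching fault-tolerant resolving set $\mathscr{F}$ of $H_i$ with $|\mathscr{F}|=\fdim(H_i)-1$ forces $v_i$ into some fault-tolerant basis. Your argument settles this only in the subcase where $\mathscr{F}$ itself resolves $H_i$ (then $\mathscr{F}\cup\{v_i\}$ is the desired basis); the complementary subcase, in which some pair of vertices is resolved by $v_i$ alone, is left as an unexecuted ``exchange'' sketch. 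That subcase is not vacuous: in an even cycle rooted at a vertex $v$, two vertices antipodal to each other but not to $v$ form an attaching fault-tolerant resolving set of size $\fdim(C_{2m})-1=2$ which does not resolve the cycle. Moreover, the ``dichotomy recorded earlier,'' on which your $\mathscr{H}_2$ lower bound also rests, is asserted in the paper without proof and is false as stated: for a path rooted at a leaf, $\fdim^*(P_n)=0$ while $\fdim(P_n)-1=1$. Under $\mathscr{C}2$ the dichotomy may be salvageable, but proving $\fdim^*(H_i)\geq\fdim(H_i)-1$ runs into exactly the same unresolved subcase. So both directions of the key identity --- $\fdim^*(H_i)$ equals $\fdim(H_i)-1$ or $\fdim(H_i)$ according to whether $v_i$ lies in a fault-tolerant basis --- remain unproven precisely where the real content lies. (In fairness, the paper supplies no more detail either; what you could not complete is exactly what the paper leaves implicit.)
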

		
\sk{
\begin{proof}
Since in the primary internal subgraph $G$ of $G[\mathscr{H}]$, every vertex of it is an attachment vertex, $\fdim^*(G) = 0$. All the other primary subgraphs are end-subgraphs and each of them  satisfies $\mathscr{C}2$ by a theorem's assumption. Since in addition the primary internal subgraph $G$ of $G[\mathscr{H}]$ satisfies $\mathscr{C}1$ (the condition is void as all vertices of $G$ are attaching vertices), the result follows by Theorem~\ref{theorem2}.
\end{proof}		
}	
			
Next, we examine the case where the family \( \mathscr{H} \) consists of  graphs that are vertex-transitive.  
Let \( \text{Aut}(H) \) denote the automorphism group of a graph \( H \).  For any two vertices \( x_1, x_2 \in V(H) \) and any automorphism \( f \in \text{Aut}(H) \),  
the distance between the vertices is preserved. Consequently, if \( \mathscr{F} \) is a \textbf{FTB} of \( H \)  
and \( f \in \text{Aut}(H) \), then the image of the basis under the automorphism, \( f(\mathscr{F}) \),  
is also a \textbf{FTB} of \( H \). Thus, each vertex of $H$ must belong to some \textbf{FTB}.  The next remark is the direct consequence of  Corollary \ref{corollary3}. 
			
\begin{rem}
Let \( \mathscr{H} = \{H_i: i\in [n]\} \) be a collection of  vertex-transitive graphs of order at least 3, and  \( G \) of order at least 2, then  
\begin{equation*}  
\fdim(G[\mathscr{H}]) = \sum_{i=1}^n \big( \fdim(H_i) - 1 \big).  
\end{equation*}  
Further, if \( \mathscr{H} = \{K_{r_i}: i\in [n]\} \),  we have 
\begin{equation*}  
\fdim(G[\mathscr{H}]) = \sum_{i=1}^n (r_i - 1),
\end{equation*}  
and if \( \mathscr{H} = \{C_{r_i}: i\in [n]\} \),  then  
				\begin{equation*}  
					\fdim(G[\mathscr{H}]) = 2\cdot n.  
				\end{equation*}
				
			\end{rem}
A special case of rooted product graphs arises when the family \( \mathscr{H} \) consists of \( n \) isomorphic rooted graphs. Let \( V(G) = \{g_i: i \in [n]\}\), and let \( V(H) = \{h_i: i \in [n']\}\). Declare the vertex $h = h_1$ to be the root of \( H \). The {\em rooted product} \( G \circ_h H \) has the vertex set  
$$V(G \circ_h H) = V(G) \times V(H) = \big\{(g_i,h_j):\ i\in [n], j\in [n']\big\}\,,$$ 
and the edge set
$$E(G\circ_h H) = 
\big\{(g_{i},h)(g_{i'},h):\ g_ig_{i'}\in E(G)\big\} \cup 
\bigcup_{i=1}^n \big\{(g_i,h_j)(g_i,h_{j'}):\ h_jh_{j'}\in E(H)\big\}\,.$$
The following result is a special case of Corollary~\ref{corollary3}. 
						
			\begin{prop}\label{prop7}
				If \( H \) is not isomorphic to a path and $v\in V(H)$, then the following hold. 
                \begin{enumerate}
                    \item[(i)] If \( v \) is not a part of any \textbf{FTB} of \( H \), 
				then for any  \( G \) of order \( n \),  
				\begin{equation*}  
					\fdim(G \circ_v H) = n \cdot \fdim(H).  
				\end{equation*}
                    \item[(ii)] If $v$ belongs to a \textbf{FTB} of $H$, then for any $G$ of order at least 2, 
			 	\begin{equation*}
			 		\fdim(G\circ_v H)=n\cdot (\fdim(H)-1).
			 	\end{equation*}
                \end{enumerate}
			 \end{prop}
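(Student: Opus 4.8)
The plan is to recognize $G \circ_v H$ as the special instance of the rooted product $G[\mathscr{H}]$ in which all $n$ attached graphs are isomorphic copies of the single rooted graph $(H,v)$, and then to read off both cases directly from Corollary~\ref{cor5}. Explicitly, by the definition of $G \circ_v H$ we set $\mathscr{H}=\{H_1,\dots,H_n\}$ with every $H_i$ a copy of $H$ and every root $v_i$ the image of $v$; the $n$ roots are identified with the pairwise distinct vertices $u_1,\dots,u_n$ of $G$, so distinct end-subgraphs share no attachment vertex. Because $H$ is not isomorphic to a path, condition $\mathscr{C}2$ for $\At(H)=\{v\}$ holds through its clause ``$G_i$ is not a path'', no matter where $v$ lies in $H$; hence each $H_i$ satisfies $\mathscr{C}2$. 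Since the internal subgraph $G$ has $\At(G)=V(G)$ and therefore meets $\mathscr{C}1$ (item~1 of the list after Theorem~\ref{theorem2}) while contributing $\fdim^*(G)=0$, all hypotheses of Corollary~\ref{cor5} are in force as soon as $n\ge 2$.

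The decisive observation is that the $n$ end-subgraphs are mutually isomorphic copies of the same rooted graph, so Corollary~\ref{cor5} classifies them uniformly: either every $H_i$ lies in $\mathscr{H}_1$ or every $H_i$ lies in $\mathscr{H}_2$, according to whether $v$ belongs to no fault-tolerant basis of $H$ or to some fault-tolerant basis of $H$. In case (i), $v$ lies in no fault-tolerant basis, all copies fall in $\mathscr{H}_1$, and Corollary~\ref{cor5} gives $\fdim(G\circ_v H)=\sum_{i=1}^n \fdim(H)=n\,\fdim(H)$. In case (ii), $v$ lies in some fault-tolerant basis, all copies fall in $\mathscr{H}_2$, and the same corollary yields $\fdim(G\circ_v H)=\sum_{i=1}^n(\fdim(H)-1)=n(\fdim(H)-1)$.

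I expect the only point needing separate care, rather than a genuine obstacle, to be the degenerate order $n=1$ that statement (i) admits but that falls outside the range $k\ge 3$ (here $k=n+1$) underlying Theorem~\ref{theorem2}, and hence Corollary~\ref{cor5}. For $n=1$ the rooted product collapses to a single copy of $H$, so $\fdim(G\circ_v H)=\fdim(H)=1\cdot\fdim(H)$ and (i) holds trivially; I would dispose of this boundary case by hand and invoke Corollary~\ref{cor5} only for $n\ge 2$. Statement (ii) correctly excludes $n=1$, since then the value is the true $\fdim(H)$ rather than $\fdim(H)-1$. With this caveat the proposition is a direct specialization, its entire content being inherited from Corollary~\ref{cor5} and ultimately from Theorem~\ref{theorem2}.
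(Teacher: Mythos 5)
Your proof is correct, and in spirit it does what the paper does: Proposition~\ref{prop7} is presented there with no separate argument, merely as a specialization of the general rooted-product machinery. The one substantive difference is which earlier result is invoked. The paper states that Proposition~\ref{prop7} is the reduction of Corollary~\ref{corollary3} (the extremal case involving $\theta_i$), whereas you specialize Corollary~\ref{cor5}; your choice is in fact the sounder one. Corollary~\ref{corollary3} formally requires $\At(G_i)\neq V(G_i)$ and $\fdim(G_i)=\fdim^{+}(G_i)$ for every primary subgraph, but in a rooted product the internal subgraph $G$ has $\At(G)=V(G)$, and Proposition~\ref{prop7} imposes no assumption $\fdim(H)=\fdim^{+}(H)$ on $H$; so the hypotheses of Corollary~\ref{corollary3} are not literally met here, and only its conclusion (with $\theta=\fdim(G)$ for the internal factor and $\theta_i\in\{0,1\}$ for the copies of $H$, according to whether $v$ lies in a fault-tolerant basis) matches the claimed formulas. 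Corollary~\ref{cor5}, which you use instead, needs only that each rooted factor satisfies $\mathscr{C}2$ --- exactly what ``$H$ is not a path'' guarantees, irrespective of where $v$ sits --- together with pairwise distinct roots and $n\ge 2$, and you verify all of this. Your explicit treatment of the boundary case $n=1$ in part (i), which falls outside the range $k\ge 3$ of Theorem~\ref{theorem2} and which the paper passes over silently, is a small but genuine improvement in rigor.
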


Proposition~\ref{prop7} raises the question of identifying the conditions of necessity and sufficiency  for \( v \in V(H) \) to be part of a \textbf{FTB} of \( H \).  It is straightforward to verify that a \( v \) is in  \textbf{FTB} of \( P \)  iff \( v \) is one of its leaf vertices. Building on this observation, Proposition~\ref{prop7}~(i) yields: 
			 
\begin{cor}
\label{cor:path-one-case}
Let \( v \in V(H) \) is not part of any \textbf{FTB} of \( H \), and let $G$ be a graph of order \( n \). Then $\fdim(G \circ_v H) = 2n$ iff \( H\cong P \) and $v$ is not a leaf.
\end{cor}

In view of Proposition~\ref{prop7} and Corollary~\ref{cor:path-one-case}, the remaining case to be considered is when the second factor of a rooted product graph is a path with the root as a leaf. For this specific case, the following bounds are established.
						
\begin{prop}
\label{prop:path-atttached-at-leaf}
If $G$ is of order $n\geq 2$, and $v$ is a leaf of a non-trivial path $P$, then  
	\begin{equation*}
		\fdim(G)\leq \fdim(G\circ_v P)\leq n.
	\end{equation*}
\end{prop}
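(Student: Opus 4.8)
The plan is to set coordinates on $G\circ_v P$ and reduce everything to its distance formula. Write $V(G)=\{u_1,\dots,u_n\}$ and let $P$ be the path $v=w_0,w_1,\dots,w_p$ with $p\ge 1$, so that the root $v=w_0$ is the leaf identified with each $u_i$ and $(u_i,w_p)$ is the tip of the $i$-th pendant. The first step records that for any two vertices of $G\circ_v P$ one has $d((u_i,w_a),(u_k,w_c))=|a-c|$ when $i=k$ and $d((u_i,w_a),(u_k,w_c))=a+d_G(u_i,u_k)+c$ when $i\ne k$. Two consequences drive both inequalities: the ``$G$-level'' set $\{(u_i,w_0)\}$ is an isometric copy of $G$, and a landmark $(u_k,w_c)$ separates the $G$-level pair $(u_i,w_0),(u_j,w_0)$ if and only if $u_k$ separates $u_i,u_j$ in $G$ — crucially, this is independent of the depth $c$, since both distances are shifted by the same $c$. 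I will also use the criterion that a set is fault-tolerant resolving exactly when every pair of distinct vertices is separated by at least two of its elements.

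For the upper bound I would exhibit one fault-tolerant resolving set of size $n$, namely the set of tips $S=\{(u_i,w_p):1\le i\le n\}$, and check that each pair is separated by two tips. If $(u_i,w_a),(u_i,w_b)$ lie in a common pendant ($a\ne b$), then $(u_i,w_p)$ gives $p-a\ne p-b$, while any other tip $(u_k,w_p)$ — which exists because $n\ge 2$ — gives $a+d_G(u_i,u_k)+p\ne b+d_G(u_i,u_k)+p$. If $(u_i,w_a),(u_j,w_b)$ lie in different pendants, then both $(u_i,w_p)$ and $(u_j,w_p)$ separate them, because $p-a\le p<p+1\le d_G(u_i,u_j)+b+p$ and symmetrically. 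Hence $S$ is fault-tolerant and $\fdim(G\circ_v P)\le n$.

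For the lower bound I would start from a fault-tolerant basis $\mathscr{F}$ of $G\circ_v P$ and project it to $C=\{u_k:\mathscr{F}\cap\Pi_k\ne\emptyset\}\subseteq V(G)$, where $\Pi_k$ denotes the $k$-th pendant; clearly $|C|\le|\mathscr{F}|$. By depth-independence, $C$ separates every pair of $G$, so $C$ is a resolving set; more generally, writing $f_k=|\mathscr{F}\cap\Pi_k|$, fault-tolerance of $\mathscr{F}$ on the $G$-level pairs gives $\sum_{u_k\in R_{ij}}f_k\ge 2$ for every pair, where $R_{ij}$ is the set of vertices of $G$ separating $u_i,u_j$ (note $u_i,u_j\in R_{ij}$). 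Recalling that $\fdim(G)$ equals the least size of a set $S$ with $|S\cap R_{ij}|\ge 2$ for all pairs, the goal becomes to produce such a doubly-separating set of size at most $\sum_k f_k=|\mathscr{F}|$.

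The main obstacle is exactly this last upgrade. The difficulty is genuine: a single pendant may hold several landmarks at different depths, which by depth-independence all project to one vertex $u_k$, thereby providing real redundancy for the $G$-level pairs while contributing only $1$ to $|C|$. To control this I would bring in the depth-shifted pairs: for $k\ne i,j$ the landmark $(u_k,w_c)$ separates $(u_i,w_a),(u_j,w_b)$ iff $d_G(u_i,u_k)-d_G(u_j,u_k)\ne b-a$, so a single pendant covers every offset but one. The plan is to show that whenever the separators of some pair concentrate in a single pendant $u_{k_0}$ — the only way $C$ can fail to doubly resolve — these offset constraints force surplus landmarks in other pendants; one then repairs $C$ by adjoining an endpoint $u_i$ or $u_j$ (which lies in $R_{ij}$) and charges it against the surplus, so the repaired set keeps at most $|\mathscr{F}|$ vertices. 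Making this charging globally consistent — equivalently, proving that for the metric system $R$ a weighted double-cover can always be converted into a genuine double-cover of no larger size — is the crux, and it is precisely here that the metric rigidity of $G$, through $\{u_i,u_j\}\subseteq R_{ij}$, the triangle inequality, and the non-triviality of $P$, must be exploited.
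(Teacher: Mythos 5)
Your upper bound is correct and is essentially the paper's own argument: the paper also exhibits the set of pendant tips, written there as $V(G)\times\{v'\}$ with $v'$ the leaf of $P$ other than $v$, and checks the same two cases (same pendant versus different pendants). Your distance formula, the depth-independence observation, and the criterion that a set is fault-tolerant resolving exactly when every pair is separated by at least two of its elements are all correct, so the right-hand inequality $\fdim(G\circ_v P)\leq n$ is fully established.

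The genuine gap is in the lower bound, and you flag it yourself: you reduce $\fdim(G)\leq\fdim(G\circ_v P)$ to the claim that the weighted double-cover coming from the counts $f_k=|\mathscr{F}\cap\Pi_k|$ (which satisfies $\sum_{u_k\in R_{ij}}f_k\geq 2$ for every pair) can be converted into a genuine set $S\subseteq V(G)$ with $|S\cap R_{ij}|\geq 2$ and $|S|\leq\sum_k f_k$, but the ``repair and charge'' step is only a plan, never a proof. The obstacle is real: if two landmarks sit in one pendant $\Pi_{k_0}$, the projection $C$ can fail on pairs $\{u_i,u_j\}$ whose unique resolver in $C$ is $u_{k_0}$ (both $\Pi_i$ and $\Pi_j$ are then empty), and the shifted-pair constraints you propose to charge against only produce surplus landmarks when the offset $d_G(u_{k_0},u_j)-d_G(u_{k_0},u_i)$ has absolute value at most $p$; when it exceeds $p$, no vertex of the product realizes that offset, the constraints give nothing, and your charging scheme has no budget. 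So the left inequality is not proved. For comparison, the paper disposes of this direction in one sentence: $G$ sits as an isometric (induced) subgraph of $G\circ_v P$, and ``any fault-tolerant resolving set of $G\circ_v P$ must in particular resolve $G$.'' That argument implicitly uses your depth-independence to project landmarks onto attachment vertices, but it never addresses the collapse-of-multiplicities issue you isolated (and dimension-type parameters are not monotone under isometric subgraphs in general, so the one-liner really does lean on the pendant structure). In short: your write-up stops exactly at the point the paper glosses over; to match the paper you would state the projection argument outright, while a fully rigorous treatment still requires the conversion step that neither text supplies.
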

\begin{proof}
$G$ appears as an induced subgraph of $G\circ_v P$. Since any  \textbf{FTRS} of $G\circ_v P$ must in particular resolve $G$, the lower bound follows. 

To establish the upper bound we claim that $V(G)\times \{v'\}$ is a  \textbf{FTRS} of $G\circ_v P$, $v'\ne v$ denotes the leaf of $P$. Let $(x',y'), (x,y)\in V(G\circ_v P)$.  If $x=x'$, then for any $u_1, u_2 \in V(G)$,  
	$$	d\!\left((u_1,v'),(x,y)\right) \ne d\!\left((u_1,v'),(x,y')\right).$$
And if $x\ne x'$, then     
$$d\!\left((x,v'),(x,y)\right) < d\!\left((x,v'),(x',y')\right) 
	\text{ and }  
	d\!\left((x',v'),(x,y)\right) >d\!\left((x',v'),(x',y')\right).$$
Hence $V(G)\times \{v'\}$ is indeed a  \textbf{FTRS} for $G\circ_v P$, hence $\fdim(G\circ_v P)\leq n$.
\end{proof}

To see that the upper bound of Proposition~\ref{prop:path-atttached-at-leaf} is sharp, consider the example from Figure~\ref{exampleforcor3}. 
            
			\begin{figure}[ht!]
				\centering
				\subfigure[]{\includegraphics{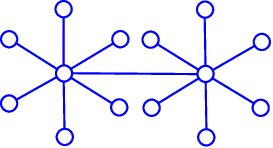}} 
				\hspace{1cm}
				\subfigure[]{\includegraphics{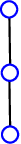}} 
				\hspace{1cm}
				\subfigure[]{\includegraphics{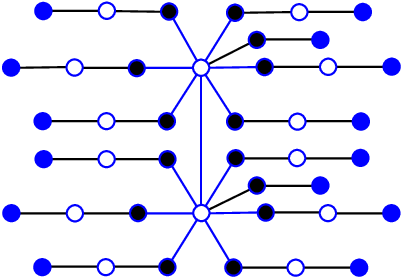}}
				\caption{(a) Graph $G$ (b) Graph $H=P_3$ (c) $G\circ_v P_3$, where the blue vertices form a fault-tolerant basis, and the black vertices are the attaching vertices.}
				\label{exampleforcor3}
			\end{figure}

		\section{Conclusion}
		
		In this paper, we have introduced an effective methodology for computing the fault-tolerant metric dimension of graphs constructed through point-attaching techniques involving primary subgraphs. By systematically analyzing the role of distance relations and connectivity among the attached substructures, we established explicit formulas for determining  \textbf{FTMD} of graphs having subgraphs that satisfying both conditions $(\mathscr{C}_1)$ and $(\mathscr{C}_2)$. This framework not only simplifies the computation for complex graphs but also enhances our understanding of how the structural properties of the primary subgraphs influence the overall fault tolerance. Explicit formulas for determining the \textbf{FTMD} of graphs whose subgraphs do not satisfy conditions $(\mathscr{C}_1)$ or $(\mathscr{C}_2)$ are under investigations.
		
		Furthermore, we extended our approach to specific graph products, including the rooted product, by expressing their fault-tolerant metric dimensions in terms of the fault-tolerant metric dimensions of their component subgraphs. This generalization highlights the versatility and applicability of our method across various graph constructions.
		
		The results presented contribute significantly to the study of fault-tolerant graph invariants and offer a modular approach for analyzing large-scale networks. Such insights are especially relevant for applications in network design, fault detection, and resilient communication systems, where maintaining unique identifiability despite node failures is essential.

\end{document}